\newcommand{\legendre}[2]{%
\genfrac{(}{)}{}{}{#1}{#2}}
\newtheorem{theorem}{Theorem}[section]
\newtheorem{lemma}[theorem]{Lemma}
\newtheorem{conjecture}[theorem]{Conjecture}
\theoremstyle{definition}
\newtheorem{remark}[theorem]{Remark}
\newtheorem{example}[theorem]{Example}
\DeclareMathOperator{\ord}{ord}
\DeclareMathOperator{\Gal}{Gal}
\DeclareFontFamily{U}{wncy}{}
\DeclareFontShape{U}{wncy}{m}{n}{<->wncyr10}{}
\DeclareSymbolFont{mcy}{U}{wncy}{m}{n}
\DeclareMathSymbol{\Sh}{\mathord}{mcy}{"58}
\DeclareSymbolFont{cyrillic}{T2A}{cmr}{m}{n}
\DeclareMathSymbol{\Sha}{\mathalpha}{cyrillic}{216}
\setlist[enumerate]{leftmargin=*}
\title{Prime twists of elliptic curves}
\begin{document}
\author[Daniel Kriz]{Daniel Kriz}\email{dkriz@princeton.edu}
\address{Department of Mathematics, Princeton University, Fine Hall, Washington Rd, Princeton, NJ 08544}
\author[Chao Li]{Chao Li}\email{chaoli@math.columbia.edu} 
\address{Department of Mathematics, Columbia University, 2990 Broadway,
 New York, NY 10027}

\subjclass[2010]{11G05 (primary), 11G40 (secondary).}
\keywords{elliptic curves, quadratic twists, Heegner points, Silverman's conjecture}

\date{\today}

\maketitle

\begin{abstract}
For certain elliptic curves $E/\mathbb{Q}$ with $E(\mathbb{Q})[2]=\mathbb{Z}/2 \mathbb{Z}$, we prove a criterion for prime twists of $E$ to have analytic rank 0 or 1, based on a mod 4 congruence of 2-adic logarithms of Heegner points. As an application, we prove new cases of Silverman's conjecture that there exists a positive proposition of prime twists of $E$ of rank zero (resp. positive rank).
\end{abstract}

\setcounter{tocdepth}{1}

\section{Introduction}

\subsection{Silverman's conjecture}

Let $E/\mathbb{Q}$ be an elliptic curve. For a square-free integer $d$, we denote by $E^{(d)}/\mathbb{Q}$ its quadratic twist by $\mathbb{Q}(\sqrt{d})$. Silverman made the following conjecture concerning the prime twists of $E$ (see \cite[p.653]{Ono1998}, \cite[p.350]{Ono1997}).

\begin{conjecture}[Silverman]\label{conj:silverman}
  Let $E/\mathbb{Q}$ be an elliptic curve. Then there exists a positive proportion of primes $\ell$ such that $E^{(\ell)}$ or $E^{(-\ell)}$ has rank $r=0$ (resp. $r>0$).
\end{conjecture}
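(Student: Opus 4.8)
The plan is to deduce Conjecture~\ref{conj:silverman} for a suitable family of $E/\mathbb{Q}$ with $E(\mathbb{Q})[2]=\mathbb{Z}/2\mathbb{Z}$ from the Heegner-point criterion of the paper. Fix such an $E$ satisfying the running hypotheses (rational $2$-torsion, together with the conditions on the conductor, the $2$-isogenous curve $E'$, and the residual representations). For a prime $\ell$ lying in an explicit union of residue classes modulo $M=M(E)$ — chosen so that the relevant imaginary quadratic fields satisfy the Heegner hypothesis and so that the global root numbers of the pair $(E^{(\ell)},E^{(-\ell)})$ are forced, say, to be $(+1,-1)$ — the criterion furnishes, on the one hand, that $E^{(\ell)}$ has $L(E^{(\ell)},1)\neq 0$ (hence $\rank E^{(\ell)}(\mathbb{Q})=0$) exactly when a prescribed mod-$4$ congruence relating a period-normalized $L$-value (or a Fourier coefficient of an associated half-integral weight form) to its analogue for $E'$ holds; and, on the other hand, that $E^{(-\ell)}$ has analytic rank $1$ — hence, by Gross--Zagier and Kolyvagin, $\rank E^{(-\ell)}(\mathbb{Q})=1$ — exactly when the corresponding mod-$4$ congruence between the $2$-adic logarithms $\log_{\hat\omega}$ of Heegner points (on $E^{(-\ell)}$, or equivalently the isotypic components on $E$ over $\mathbb{Q}(\sqrt{-\ell})$, and on $E'$) fails. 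Either way, it suffices to show each mod-$4$ condition is satisfied for a positive proportion of such $\ell$.

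The next step is to show that each of these mod-$4$ conditions is a \emph{Frobenius-controlled} function of $\ell$. Unwinding the criterion at $p=2$ — computing the Heegner-point logarithm via the formal group of $E^{(-\ell)}$ together with the Gross/Waldspurger-type comparison with $E'$, and similarly on the $L$-value side — one finds that the mod-$4$ class is given by an explicit elementary expression in $\ell$: a value of a genus character, a Gauss sum, or, equivalently, the number of representations of $\ell$ (or of a fixed multiple of $\ell$) by a fixed ternary quadratic form, taken modulo a small power of $2$. In every such description the quantity depends only on the splitting of $\ell$ in a fixed finite Galois extension $F/\mathbb{Q}$ — a ring class field, or the field cut out by a mod-$2^k$ Galois representation. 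Consequently, by the Chebotarev density theorem, the set of primes $\ell$ in the allowed classes for which the mod-$4$ condition holds has a well-defined density; the same applies to the twin condition governing the positive-rank twists.

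It then remains to show these densities are positive, i.e.\ that the relevant Chebotarev (sub)classes are nonempty. For this I would exhibit seeds: a single prime $\ell_0$ in the allowed classes with $L(E^{(\ell_0)},1)\neq 0$, and a single prime $\ell_1$ with $\ord_{s=1}L(E^{(-\ell_1)},s)=1$ — each a finite verification (a direct $L$-value computation, a Heegner-point height or logarithm computation, or an appeal to existing tables). Since the mod-$4$ value is Frobenius-controlled, one well-chosen prime in each good class certifies that the class is nonempty, hence of positive density. Combining, a positive proportion of primes $\ell$ have $E^{(\ell)}$ of rank $0$, and a positive proportion have $E^{(-\ell)}$ of positive rank; in particular, for a positive proportion of primes $\ell$ one of $E^{(\ell)},E^{(-\ell)}$ has rank $0$, and for a positive proportion one has positive rank, which is Conjecture~\ref{conj:silverman} for $E$.

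The main obstacle is the mod-$4$ refinement in the second step. A mod-$2$ version of the comparison — enough only to decide whether $\log_{\hat\omega}P_\ell$ is zero or a unit in the relevant residue field — follows from Kolyvagin's Euler-system congruences or from Gross's formula; but controlling the behavior modulo $4$ requires tracking every stray power of $2$ exactly: the discrepancy between the N\'eron differential and the normalization $\hat\omega$ (a Manin-constant issue), the local conditions at $2$ and at the primes of additive or multiplicative reduction of $E^{(\pm\ell)}$, and the $2$-part of the index of the Heegner point in the Mordell--Weil group (equivalently, the precise structure of the $2^\infty$-Selmer group). A secondary difficulty is to verify \emph{simultaneously} that the good Chebotarev classes are nonempty \emph{and} that the residue classes modulo $M$ imposing both the Heegner hypothesis and the prescribed pair of root numbers are themselves nonempty — a genus-theory computation that must be carried out uniformly over the admissible family, and which ultimately pins down exactly which curves $E$ the method reaches.
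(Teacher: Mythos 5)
Your proposal does not close the central gap, and its architecture differs from what actually makes the paper's argument work. The crucial unjustified claim is in your second step: that the mod-$4$ conditions, when expressed through period-normalized $L$-values, half-integral weight Fourier coefficients, Gauss sums, or representation numbers of a ternary quadratic form, are ``Frobenius-controlled,'' i.e.\ depend only on the splitting of $\ell$ in a fixed finite Galois extension. No mechanism is offered for this, and it is false in general: coefficients of half-integral weight forms (equivalently, the Waldspurger-type quantities attached to $E^{(\pm\ell)}$) are not class functions of $\Frob_\ell$ in any fixed finite extension, and this is exactly the obstruction the paper is designed to circumvent. In the paper, the only twist-dependent quantity that must be controlled is $|E(\mathbb{F}_\ell)| \bmod 4$: the congruence of Theorem \ref{thm:maincongruence} (applied with $p=2$, $m=2$, legitimate because $E[4]^{\mathrm{ss}}\cong E^{(d)}[4]^{\mathrm{ss}}$) compares the \emph{fixed} Heegner point $P$ on $E$ with the Heegner point on the twist, and the varying factor on the left-hand side of (\ref{eq:2d}) is $|E(\mathbb{F}_\ell)|/\ell$ for $\ell=|d|$. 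Lemma \ref{lem:mod4} --- which uses precisely the hypotheses $E(\mathbb{Q})[2]\cong\mathbb{Z}/2\mathbb{Z}$ and no rational cyclic $4$-isogeny, via the isogenous curve $E_0$ --- converts $|E(\mathbb{F}_\ell)|\not\equiv 0\pmod 4$ into inertness of $\ell$ in $\mathbb{Q}(E[2])$ and $\mathbb{Q}(E_0[2])$, which genuinely is a Chebotarev condition. Your proposal contains neither this reduction nor any substitute for it.

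A second structural problem: you have the mod-$4$ congruence varying with $\ell$ and deciding rank $0$ versus rank $1$, with separate seed primes $\ell_0,\ell_1$ needed to certify nonemptiness of each class. In the paper the only numerical verification is the single condition (\ref{eq:star}) on $E$ itself over one auxiliary field $K$; once it holds, \emph{every} $d\in\mathcal{N}$ gives $r_\mathrm{an}(E^{(d)}/K)=1$, and the dichotomy rank $0$ versus rank $1$ over $\mathbb{Q}$ is decided not by whether some congruence holds or fails but by the root number $w(E^{(\ell^*)}/\mathbb{Q})=w(E/\mathbb{Q})\cdot\legendre{-N}{\ell}$, which takes both signs on positive-density subsets of $\mathcal{T}$ because $\mathbb{Q}(\sqrt{-N})$ is assumed linearly disjoint from the other fields. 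Your proposal never supplies this root-number step, so even granting your Chebotarev claims you would not obtain both the rank-$0$ and positive-rank halves of the conjecture simultaneously; and your reliance on seed primes pushes the real difficulty (nonvanishing for some twist in each class, plus the exact $2$-power bookkeeping you flag as the ``main obstacle'') into exactly the computations the paper's method is built to avoid. To repair the argument you would need to replace the per-twist congruences by the fixed-curve hypothesis (\ref{eq:star}) together with the $|E(\mathbb{F}_\ell)|\bmod 4$ factor and Lemma \ref{lem:mod4}, and add the root-number analysis.
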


\begin{remark}
Conjecture \ref{conj:silverman} is known for the congruent number curve $E: y^2=x^3-x$. In fact, $E^{(\ell)}$ has rank $r=0$ if $\ell\equiv3\pmod{8}$ and $r=1$ if $\ell\equiv 5,7\pmod{8}$. This follows from classical 2-descent for $r=0$ and Birch \cite{Birch1970} and Monsky \cite{Monsky1990} for $r=1$ (see also \cite{Stephens1975}).
\end{remark}

\begin{remark}
  Although Conjecture \ref{conj:silverman} is still open in general, many special cases have been proved. For $r=0$, see Ono \cite{Ono1997} and Ono--Skinner \cite[Cor. 2]{Ono1998} (including all elliptic curves with conductor $\le100$). For $r=1$, see Coates--Y. Li--Tian--Zhai \cite[Thm. 1.1]{Coates2015}.
\end{remark}

In our recent work \cite[Thm. 3.3]{KrizLi2016a}, we have proved Conjecture \ref{conj:silverman} (for both $r=0$ and $r=1$) for a wide class of elliptic curves with $E(\mathbb{Q})[2]=0$. The goal of this short note is to extend our method to certain elliptic curves with $E(\mathbb{Q})[2]\cong \mathbb{Z}/2\mathbb{Z}$.

\subsection{Main results}

Let $E/\mathbb{Q}$ be an elliptic curve of conductor $N$. We will use $K$ to denote an imaginary quadratic field satisfying the \emph{Heegner hypothesis for $N$}:
\begin{center}
each prime factor $\ell$ of $N$ is split in $K$.
\end{center}
We denote by $P\in E(K)$ the corresponding Heegner point, defined up to sign and torsion with respect to a fixed modular parametrization $\pi_E: X_0(N)\rightarrow E$. Let $$f(q)=\sum_{n=1}^\infty a_n(E) q^n\in S_2^\mathrm{new}(\Gamma_0(N))$$ be the normalized newform associated  to $E$. Let $\omega_E\in \Omega_{E/\mathbb{Q}}^1 := H^0(E/\mathbb{Q},\Omega^1)$ such that $$\pi_E^*(\omega_E)= f(q) \cdot dq/q.$$ We denote by $\log_{\omega_E}$ the formal logarithm associated to $\omega_E$.

Our main result is the following criterion for prime twists of $E$ of analytic (and hence algebraic) rank 0 or 1. 

\begin{theorem}\label{thm:criterion}
Let $E/\mathbb{Q}$ be an elliptic curve. Assume $E(\mathbb{Q})[2]\cong \mathbb{Z}/2 \mathbb{Z}$ and $E$ has no rational cyclic 4-isogeny. Assume there exists an imaginary quadratic field $K$ satisfying the Heegner hypothesis for $N$ such that
  \begin{equation}
    \label{eq:star}\
2\text{ splits in } K \text{ and }\quad \frac{|\tilde E^\mathrm{ns}(\mathbb{F}_2)|\cdot\log_{\omega_E}(P)}{2}\not\equiv0\pmod{2}.     \tag{$\bigstar$}
\end{equation} Let $\mathcal{S}$ be the set of primes $$\mathcal{S}:=\{\ell\nmid 2N: \ell\text{ splits in } K, |E(\mathbb{F}_\ell)|\not\equiv 0\bmod{4} \}.$$ Let $\mathcal{N}$ be the set of signed primes $$\mathcal{N}=\{d=\pm \ell: \ell \in \mathcal{S}, \text{any odd prime }q||N \text{splits  in } \mathbb{Q}(\sqrt{d})\}.$$
Then for any $d\in \mathcal{\mathcal{N}}$, we have the analytic rank $r_\mathrm{an}(E^{(d)}/K)=1$. In particular, $$r_\mathrm{an}(E^{(d)}/\mathbb{Q})=
\begin{cases}
  0 , & \text{if }w(E^{(d)}/\mathbb{Q})=+1, \\
  1, & \text{if } w(E^{(d)}/\mathbb{Q})=-1.
\end{cases}$$
 where $w(E^{(d)}/\mathbb{Q})$ denotes the global root number of $E^{(d)}/\mathbb{Q}$.
\end{theorem}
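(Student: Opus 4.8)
The plan is to run a $p$-adic (here $p=2$) Gross--Zagier / Waldspurger style argument combined with the $2$-adic logarithm congruence hypothesis $(\bigstar)$, in the spirit of the authors' earlier work \cite{KrizLi2016a} for the $E(\mathbb{Q})[2]=0$ case, adapting everything to the presence of a rational $2$-torsion point. The key object is the Heegner point $P_d \in E^{(d)}(K)$ attached to the twisted curve; its $2$-adic formal logarithm $\log_{\omega_{E^{(d)}}}(P_d)$ governs the analytic rank via the $p$-adic Gross--Zagier formula (in the form proved by Bertolini--Darmon--Prasanna / Kobayashi, or by Disegni), which says that $\log_{\omega_{E^{(d)}}}(P_d)$ is, up to an explicit nonzero $2$-adic period factor, the special value of the $2$-adic $L$-function $L_p(E^{(d)}/K)$. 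So the whole theorem reduces to: \emph{for every $d\in\mathcal{N}$, the $2$-adic logarithm $\log_{\omega_{E^{(d)}}}(P_d)$ is a $2$-adic unit (in particular nonzero)}; this forces $P_d$ to be of infinite order, hence $r_{\mathrm{an}}(E^{(d)}/K)\ge 1$, while the anticyclotomic Euler system / Kolyvagin argument forces $r_{\mathrm{an}}(E^{(d)}/K)=1$ exactly, and then the factorization $L(E^{(d)}/K,s)=L(E^{(d)}/\mathbb{Q},s)L(E^{(dd_K)}/\mathbb{Q},s)$ together with the sign of the functional equation distributes the rank as $0$ or $1$ over $\mathbb{Q}$ according to $w(E^{(d)}/\mathbb{Q})$.

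The heart of the argument is a congruence
\[
\log_{\omega_{E^{(d)}}}(P_d) \;\equiv\; (\text{unit}) \cdot \frac{|\tilde E^{\mathrm{ns}}(\mathbb{F}_2)|\cdot \log_{\omega_E}(P)}{2} \pmod{2},
\]
relating the twisted Heegner point logarithm back to the untwisted quantity appearing in $(\bigstar)$. I would establish this in two stages. First, a \emph{congruence of $2$-adic $L$-functions} (or directly of Heegner points) for $E^{(d)}$ as $d$ ranges over $\mathcal{N}$: since $\ell$ is chosen with $|E(\mathbb{F}_\ell)|\not\equiv 0\pmod 4$ and split in $K$, the mod $2$ Galois representation and the relevant Iwasawa-theoretic data of $E^{(d)}$ and $E$ agree after accounting for the local behavior at $\ell$, so the anticyclotomic $p$-adic $L$-values (equivalently, the Heegner point logarithms) of $E^{(d)}$ and of $E$ are congruent mod $2$ up to controlled local fudge factors — this is where the condition "$|E(\mathbb{F}_\ell)|\not\equiv 0\pmod 4$" is exactly what makes the $\ell$-factor a unit mod $2$. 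Second, the normalization factor: passing between $\omega_E$ and $\omega_{E^{(d)}}$, and between the Heegner point on $X_0(N)$ and on $X_0(Nd^2)$ (or using the twisting map on the modular-curve side), introduces the Tamagawa-type factor $|\tilde E^{\mathrm{ns}}(\mathbb{F}_2)|$ and the factor of $2$ coming from the $2$-torsion point — precisely reproducing the left side of $(\bigstar)$. The condition that $E$ has no rational cyclic $4$-isogeny guarantees the mod $4$ behavior of the $2$-descent / Selmer side is not disturbed, so that "rank $\le 1$" can be upgraded to "rank $=1$".

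Concretely the steps, in order: (1) set up the twisted Heegner point $P_d$ and record the $p$-adic Gross--Zagier formula expressing $\log_{\omega_{E^{(d)}}}(P_d)$ in terms of $L_p(E^{(d)}/K)$, noting that $2$ split in $K$ puts us in the Heegner/ordinary case where this formula is available; (2) prove the mod $2$ comparison of $\log_{\omega_{E^{(d)}}}(P_d)$ with $\frac{|\tilde E^{\mathrm{ns}}(\mathbb{F}_2)|\log_{\omega_E}(P)}{2}$, using the hypotheses $\ell\in\mathcal{S}$ (split, $|E(\mathbb{F}_\ell)|\not\equiv0\pmod 4$) to control the local term at $\ell$ and the sign condition defining $\mathcal{N}$ to ensure the Heegner hypothesis for $Nd^2$ holds for $K$; (3) invoke $(\bigstar)$ to conclude $\log_{\omega_{E^{(d)}}}(P_d)$ is a $2$-adic unit, hence $P_d$ has infinite order; (4) apply Kolyvagin's theorem (the non-vanishing of a Heegner point implies $r_{\mathrm{an}}(E^{(d)}/K) = 1$ and finiteness of $\Sha$); (5) use the $L$-function factorization over $\mathbb{Q}$ and the computation of $w(E^{(d)}/\mathbb{Q})$ to split the rank as stated. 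The main obstacle is step (2): pinning down \emph{exactly} which local factors appear in the mod $2$ comparison of the twisted and untwisted Heegner point logarithms — keeping precise track of the period normalizations, the $2$-torsion contribution, and the Tamagawa-type factor at $2$ so that the constant matches $|\tilde E^{\mathrm{ns}}(\mathbb{F}_2)|$ and the stray $2$ in the denominator of $(\bigstar)$ on the nose. Everything else is either standard $p$-adic Gross--Zagier input or a root-number bookkeeping exercise.
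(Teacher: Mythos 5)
Your high-level skeleton (show the twisted Heegner point has nonvanishing $2$-adic logarithm, invoke Gross--Zagier and Kolyvagin over $K$, then do root-number bookkeeping over $\mathbb{Q}$) matches the paper, but the heart of your argument --- the congruence in your step (2), which you yourself flag as the main obstacle --- is set up at the wrong modulus, and as stated it cannot work. You propose a \emph{mod $2$} comparison and assert that the hypothesis $|E(\mathbb{F}_\ell)|\not\equiv 0\pmod 4$ ``is exactly what makes the $\ell$-factor a unit mod $2$.'' That is false in the present setting: since $E(\mathbb{Q})[2]\cong\mathbb{Z}/2\mathbb{Z}$, the reduction map is injective on prime-to-$\ell$ torsion, so $|E(\mathbb{F}_\ell)|$ is \emph{even} for every odd prime $\ell$ of good reduction; the local factor $|E(\mathbb{F}_\ell)|/\ell$ at the twisting prime is never a $2$-adic unit. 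Consequently the congruence coming from $E[2]\cong E^{(d)}[2]$ (the mechanism of \cite{KrizLi2016a} that you are implicitly invoking) reads $0\equiv 0\pmod 2$ and yields nothing --- this is precisely why the earlier method was restricted to $E(\mathbb{Q})[2]=0$, as the paper explains in its ``novelty'' discussion. The congruence you wrote, with the twisted logarithm appearing alone and all local factors absorbed into a unit, is not what the Heegner-point congruence machinery produces.

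The missing idea is to upgrade to a \emph{mod $4$} congruence. Because $E(\mathbb{Q})[2]\neq 0$ forces $a_\ell(E)$ and $a_\ell(E^{(d)})$ to be even for $\ell\nmid 2Nd$, and $a_\ell(E)=\pm a_\ell(E^{(d)})$ for a quadratic twist, one gets $a_\ell(E)\equiv a_\ell(E^{(d)})\pmod 4$, hence $E[4]^{\mathrm{ss}}\cong E^{(d)}[4]^{\mathrm{ss}}$; applying the congruence theorem of \cite{KrizLi2016a} with $p=2$, $m=2$ (after checking, via the splitting condition at odd $q\,\|\,N$ in the definition of $\mathcal{N}$, that $a_q(E)=a_q(E^{(d)})$ so those local factors cancel into $M$ --- note this, not the Heegner hypothesis for $N'$, is the role of that condition) gives
\begin{equation*}
\frac{|\tilde E^{\mathrm{ns}}(\mathbb{F}_2)|}{2}\cdot\frac{|E(\mathbb{F}_\ell)|}{\ell}\cdot\log_{\omega_E}P \;\equiv\; \pm\,\frac{|\tilde E^{(d),\mathrm{ns}}(\mathbb{F}_2)|}{2}\cdot\frac{|E^{(d)}(\mathbb{F}_\ell)|}{\ell}\cdot\log_{\omega_{E^{(d)}}}P^{(d)} \pmod 4 .
\end{equation*}
Here $|E(\mathbb{F}_\ell)|\equiv 2\pmod 4$ (by the definition of $\mathcal{S}$) contributes exactly one factor of $2$, so $(\bigstar)$ makes the left side $\not\equiv 0\pmod 4$, forcing $\log_{\omega_{E^{(d)}}}P^{(d)}\neq 0$; no $p$-adic Gross--Zagier or $2$-adic $L$-function input is needed, only the congruence theorem plus classical Gross--Zagier--Kolyvagin. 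Two smaller misattributions: the absence of a rational cyclic $4$-isogeny is not used to ``upgrade rank $\le 1$ to rank $=1$'' on the Selmer side (it enters only in the companion density result via $\mathbb{Q}(E_0[2])$ being quadratic), and the final dichotomy over $\mathbb{Q}$ is immediate from $r_{\mathrm{an}}(E^{(d)}/K)=1$ and the factorization of $L(E^{(d)}/K,s)$, with the sign of $w(E^{(d)}/\mathbb{Q})$ deciding which factor vanishes.
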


\begin{remark}
  Recall that $|\tilde E^\mathrm{ns}(\mathbb{F}_\ell)|$ denotes the number of $\mathbb{F}_\ell$-points of the nonsingular part of the mod $\ell$ reduction of $E$, which is $|E(\mathbb{F}_\ell)|=\ell+1-a_\ell(E)$ if $\ell\nmid N$, $\ell\pm1$ if $\ell|| N$ and $\ell$ if $\ell^2|N$. 
\end{remark}

\begin{remark}
  The assumption on Heegner points in Theorem \ref{thm:criterion} forces $r_\mathrm{an}(E/\mathbb{Q})\le1$.
\end{remark}

As a consequence, we deduce the following cases of Silverman's conjecture.

\begin{theorem}\label{thm:mod4}
  Let $E/\mathbb{Q}$ as in Theorem \ref{thm:criterion}. Let $\phi: E\rightarrow E_0:=E/E(\mathbb{Q})[2]$ be the natural 2-isogeny. Assume the fields $\mathbb{Q}(E[2], E_0[2])$, $\mathbb{Q}(\sqrt{-N})$, $\mathbb{Q}(\sqrt{q})$ (where $q$ runs over odd primes $q||N$) are linearly disjoint.   Then Conjecture \ref{conj:silverman} holds for $E/\mathbb{Q}$.
\end{theorem}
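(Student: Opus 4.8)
The plan is to feed Theorem \ref{thm:criterion} into a Chebotarev density argument exploiting the freedom in the global root number $w(E^{(d)}/\mathbb{Q})$ as $d$ ranges over $\mathcal{N}$. Fix an imaginary quadratic field $K$ with the Heegner hypothesis for $N$ satisfying $(\bigstar)$ — such a $K$ exists by the standing assumption ``$E$ as in Theorem \ref{thm:criterion}''. If $K$ were contained in $\mathbb{Q}(E[2],E_0[2])$ it would be unramified outside the primes dividing $2N$; but the Heegner hypothesis makes every prime dividing $N$ split (hence unramified) in $K$, so $K$ would be unramified outside $2$, forcing $K\in\{\mathbb{Q}(i),\mathbb{Q}(\sqrt2),\mathbb{Q}(\sqrt{-2})\}$ and contradicting that $2$ splits in $K$; the same ramification count shows $K$ is distinct from $\mathbb{Q}(\sqrt{-N})$ and from each $\mathbb{Q}(\sqrt q)$ ($q\|N$ odd) and is not contained in $\mathbb{Q}(\zeta_8)$. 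Together with the hypothesised mutual disjointness of $\mathbb{Q}(E[2],E_0[2])$, $\mathbb{Q}(\sqrt{-N})$ and the $\mathbb{Q}(\sqrt q)$, let $L$ be the compositum of all these fields. By Theorem \ref{thm:criterion} and the parity of the analytic rank, for $d\in\mathcal{N}$ we have $\rank E^{(d)}(\mathbb{Q})=0$ if $w(E^{(d)}/\mathbb{Q})=+1$ and $\rank E^{(d)}(\mathbb{Q})=1$ if $w(E^{(d)}/\mathbb{Q})=-1$; so it suffices to show that each of the two events ``some $d\in\{\pm\ell\}$ lies in $\mathcal{N}$ and $w(E^{(d)}/\mathbb{Q})=+1$'' and ``$\dots=-1$'' holds for a positive density of primes $\ell$, since the first yields a positive proportion of $\ell$ with $E^{(\ell)}$ or $E^{(-\ell)}$ of rank $0$ and the second a positive proportion with positive rank, which is Conjecture \ref{conj:silverman}.

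Next I would rewrite both $\mathcal{N}$-membership and the value of the root number as Frobenius conditions in $\Gal(L/\mathbb{Q})$ together with the choice of sign of $d$. For $\ell\nmid 2N$: ``$\ell$ splits in $K$'' depends only on $\Frob_\ell|_K$; ``$|E(\mathbb{F}_\ell)|\not\equiv0\bmod4$'' depends only on $\Frob_\ell|_{\mathbb{Q}(E[2],E_0[2])}$, since $E(\mathbb{Q})[2]\cong\mathbb{Z}/2\mathbb{Z}$ makes $2\mid|E(\mathbb{F}_\ell)|$ automatic and, using the absence of a rational cyclic $4$-isogeny, whether $E(\mathbb{F}_\ell)$ gains a second $2$-torsion point or a point of order $4$ above the rational $2$-torsion point is read off from the splitting of $\ell$ in $\mathbb{Q}(E[2])$ and $\mathbb{Q}(E_0[2])$ via $\phi$ and its dual $\hat\phi$; and ``$q$ splits in $\mathbb{Q}(\sqrt d)$'' for odd $q\|N$ is, by quadratic reciprocity, a condition on $\Frob_\ell|_{\mathbb{Q}(\sqrt q)}$, on $\Frob_\ell|_{\mathbb{Q}(i)}$ and on the sign of $d$. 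On the other hand, the standard quadratic-twist root number formula, applied to $d=\pm\ell$ with $\ell$ prime to $2N$ and made explicit at $\infty$, at $2$, and at the primes $q\|N$ (where the $\mathcal{N}$-condition keeps the local reduction type, hence the local root number, unchanged), shows that $w(E^{(d)}/\mathbb{Q})$ depends only on $\Frob_\ell|_{\mathbb{Q}(\sqrt{-N})}$, on $\ell\bmod 8$ (i.e. $\Frob_\ell|_{\mathbb{Q}(\zeta_8)}$) and on the sign of $d$.

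The conclusion then follows by linear disjointness: after choosing a sign and residue data that place some $d\in\{\pm\ell\}$ into $\mathcal{N}$, there remains enough freedom to realize both values of $w(E^{(d)}/\mathbb{Q})$. If some odd prime divides $N$ exactly once, then $-N$ is neither $\pm\square$ nor $\pm2\square$, so $\mathbb{Q}(\sqrt{-N})$ is disjoint from $\mathbb{Q}(\zeta_8)$ as well as from $K$, $\mathbb{Q}(E[2],E_0[2])$ and the $\mathbb{Q}(\sqrt q)$; thus $\Frob_\ell|_{\mathbb{Q}(\sqrt{-N})}$ may be flipped without disturbing $\mathcal{N}$-membership, flipping $w(E^{(d)}/\mathbb{Q})$. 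If instead no odd prime divides $N$ exactly once, the $\mathcal{N}$-condition on odd $q\|N$ is vacuous, so both $\pm\ell$ lie in $\mathcal{N}$ whenever $\ell\in\mathcal{S}$, and a short case analysis on $N\bmod 8$ shows that the pair $(w(E^{(\ell)}/\mathbb{Q}),w(E^{(-\ell)}/\mathbb{Q}))$ realizes both values $\pm1$ among its coordinates as $\ell$ ranges over $\mathcal{S}$ — so again both events occur. In either case the two events correspond to nonempty unions of Frobenius classes in $\Gal(L/\mathbb{Q})$, and the Chebotarev density theorem gives each of them a positive density of primes $\ell$, completing the proof.

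The main obstacle is the precise verification that the stated linear disjointness is exactly what decouples the root-number data from the $\mathcal{N}$-membership data. This requires an explicit computation of $w(E^{(\pm\ell)}/\mathbb{Q})$ with correct local root numbers at $\infty$, at $2$ (depending on $\pm\ell\bmod 8$, with adjustments when $2\mid N$), and at primes of additive reduction; a lemma identifying $|E(\mathbb{F}_\ell)|\bmod 4$ with the splitting behaviour of $\ell$ in $\mathbb{Q}(E[2])$ and $\mathbb{Q}(E_0[2])$ under the hypotheses on $E$; and resolving the residual $2$-adic entanglements among $\mathbb{Q}(\zeta_8)$, $\mathbb{Q}(E[2],E_0[2])$ and $K$, which is where the hypotheses ``$2$ splits in $K$'' and ``$\mathbb{Q}(\sqrt{-N})$ linearly disjoint from $\mathbb{Q}(E[2],E_0[2])$ and the $\mathbb{Q}(\sqrt q)$'' do the real work. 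The surrounding Chebotarev bookkeeping is then routine.
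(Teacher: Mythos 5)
Your overall strategy (translate $|E(\mathbb{F}_\ell)|\not\equiv 0\pmod 4$ into inertness of $\ell$ in $\mathbb{Q}(E[2])$ and $\mathbb{Q}(E_0[2])$, then play a Chebotarev argument against the twisted root number) is the same as the paper's, but your treatment of the root number has a genuine gap. Because you allow both signs $d=\pm\ell$ and compute $w(E^{(d)}/\mathbb{Q})$ from local root numbers at $2$, your root number depends on $\Frob_\ell|_{\mathbb{Q}(\zeta_8)}$ --- and the hypotheses of the theorem say nothing about $\mathbb{Q}(\zeta_8)$, which can be badly entangled with the fields you do condition on (in the paper's own example $E=256a1$ one has $\mathbb{Q}(E[2],E_0[2])=\mathbb{Q}(\sqrt2)$ and $\mathbb{Q}(\sqrt{-N})=\mathbb{Q}(i)$, both inside $\mathbb{Q}(\zeta_8)$). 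Concretely, your Case A step ``flip $\Frob_\ell|_{\mathbb{Q}(\sqrt{-N})}$ without disturbing anything else'' is not licensed by disjointness of $\mathbb{Q}(\sqrt{-N})$ from $\mathbb{Q}(\zeta_8)$ alone: if, say, $-N\equiv -q$ or $\pm 2q$ modulo squares for an odd prime $q\|N$ (which the linear-disjointness hypothesis does not exclude, since $\mathbb{Q}(\sqrt{-q})$ and $\mathbb{Q}(\sqrt q)$ are still linearly disjoint), then $\mathbb{Q}(\sqrt{-N})\subset\mathbb{Q}(\zeta_8)\cdot\mathbb{Q}(\sqrt q)$, so once you fix $\ell\bmod 8$ (which your root-number expression requires) and the splitting of $\ell$ in $\mathbb{Q}(\sqrt q)$ (which $\mathcal{N}$-membership requires), the symbol $\legendre{-N}{\ell}$ is already determined and cannot be flipped; and if you let $\ell\bmod 8$ move, you no longer know the net effect on $w$. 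Your Case B rests on the unproven assertion that ``a short case analysis on $N\bmod 8$'' makes the pair $\bigl(w(E^{(\ell)}/\mathbb{Q}),w(E^{(-\ell)}/\mathbb{Q})\bigr)$ realize both signs on $\mathcal{S}$; this cannot follow from $N\bmod 8$ alone, since for $E=256b1$ (which satisfies all the hypotheses of Theorem \ref{thm:criterion} but fails the disjointness hypothesis) one has $w(E^{(\pm\ell)}/\mathbb{Q})=-1$ for every $\ell\in\mathcal{S}$. So the decisive decoupling step is exactly where your argument is missing an idea, and the linear-disjointness hypothesis is never actually brought to bear there.

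The missing idea, and the paper's route, is to twist only by $d=\ell^*:=(-1)^{(\ell-1)/2}\ell$. Then $\mathbb{Q}(\sqrt{\ell^*})$ has discriminant $\ell^*$, coprime to $2N$, so the clean global formula $w(E^{(\ell^*)}/\mathbb{Q})=w(E/\mathbb{Q})\legendre{\ell^*}{-N}=w(E/\mathbb{Q})\legendre{-N}{\ell}$ holds with no dependence on $\ell\bmod 8$ and no local analysis at $2$, while quadratic reciprocity turns the $\mathcal{N}$-condition at odd $q\|N$ into ``$\ell$ splits in $\mathbb{Q}(\sqrt q)$''. The only Frobenius that must then be varied independently is $\Frob_\ell|_{\mathbb{Q}(\sqrt{-N})}$, and its independence from the conditions defining the paper's set $\mathcal{T}$ (split in $K$, inert in $\mathbb{Q}(E[2])$ and $\mathbb{Q}(E_0[2])$, split in the $\mathbb{Q}(\sqrt q)$) is precisely what the hypothesized linear disjointness, together with $d_K$ being coprime to $2N$, provides. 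With that modification your Chebotarev bookkeeping goes through and you recover the paper's proof; as written, the root-number decoupling is not established.
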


\subsection{Novelty of the proof.} The proof of \cite[Thm. 3.3]{KrizLi2016a} mentioned above uses the mod 2 congruence between 2-adic logarithms of Heegner points on $E$ and $E^{(d)}$ (recalled in \S \ref{sec:congr-betw-heegn} below), arising from the isomorphism of Galois representations $E[2]\cong E^{(d)}[2]$. For the congruence to be nontrivial on both sides, one needs the extra factor $|E(\mathbb{F}_\ell)|$ appearing in the formula to be \emph{odd} for $\ell|d$. This is only possible when $E(\mathbb{Q})[2]=0$.

When $E(\mathbb{Q})[2]\ne0$, we instead take advantage of the exceptional isomorphism between the mod 4 semisimplified Galois representations $E[4]^\mathrm{ss}\cong E^{(d)}[4]^\mathrm{ss}$, and consequently a \emph{mod 4 congruence} between 2-adic logarithm of Heegner points. When $E(\mathbb{Q})[2]=\mathbb{Z}/2 \mathbb{Z}$ and $E$ has no rational cyclic 4-isogeny, it is possible that the extra factor $|E(\mathbb{F}_\ell)|$  is even but \emph{nonzero mod 4}. This is the key observation to prove Theorem \ref{thm:criterion}. The application Theorem \ref{thm:mod4} then follows by Chebotarev's density after translating the condition $|E(\mathbb{F}_\ell)|\not\equiv0\pmod{4}$ into an inert condition for $\ell$ in $\mathbb{Q}(E[2])$ and $\mathbb{Q}(E_0[2])$ (Lemma \ref{lem:mod4}).

\subsection{Acknowledgments}  The examples in this note are computed using Sage (\cite{sage}).

\section{Examples}

Let us illustrate the main results by two explicit examples.

\begin{example}
  Consider the elliptic curve (in Cremona's labeling) $$E=256b1: y^2=x^3-2x$$ with $E(\mathbb{Q})[2]\cong\mathbb{Z}/2 \mathbb{Z}$. It has $j$-invariant 1728 and CM by $\mathbb{Q}(i)$.  The imaginary quadratic field $K=\mathbb{Q}(\sqrt{-7})$ satisfies the Heegner hypothesis. The associated Heegner point $y_K=(-1,-1)$ satisfies Assumption (\ref{eq:star}). The set $\mathcal{S}$ consists of primes $\ell$ such that $\ell\equiv1,2,4\pmod{7}$ and $\ell\equiv5\pmod{8}$: $$\mathcal{S}=\{29,
 37,
 53,
 109,
 149,
 197,
 277,
 317,
 373,
 389,\ldots,\}.$$ By Theorem \ref{thm:criterion}, we have $$r_\mathrm{an}(E^{(\pm\ell)}/K)=1,\text{ for any } \ell\in\mathcal{S}.$$ We compute the global root number $w(E^{(\pm \ell)}/\mathbb{Q})=-1$ and conclude that $$r_\mathrm{an}(E^{(\pm \ell)}/\mathbb{Q})=1, \quad r_\mathrm{an}(E^{(\pm 7 \ell)}/\mathbb{Q})=0, \text{ for any } \ell\in \mathcal{S}.$$
\end{example}

\begin{remark}
  Notice the two congruence conditions for $\ell\in S$ are both necessary for the conclusion: for example, we have $r_\mathrm{an}(E^{(\ell)})=2$ for $\ell=31$ and $r_\mathrm{an}(E^{(7 \ell)})=2$ for $\ell=5$.
\end{remark}

\begin{example}
  Consider the elliptic curve $$E=256a1: y^2=x^3+x^2-3x+1$$ with $E(\mathbb{Q})[2]\cong \mathbb{Z}/2 \mathbb{Z}$. It has $j$-invariant 8000 and CM by $\mathbb{Q}(\sqrt{-2})$. The imaginary quadratic field $K=\mathbb{Q} (\sqrt{-7})$ satisfies the Heegner hypothesis. The associated Heegner point $y_K=(0,1)$ satisfies Assumption (\ref{eq:star}). The 2-isogenous curve is $$E_0=256a2: y^2=x^3+x^2-13x-21.$$ We have $\mathbb{Q}(E[2])=\mathbb{Q}(E_0[2])=\mathbb{Q}(\sqrt{2})$ and $\mathbb{Q}(\sqrt{-N})=\mathbb{Q}(i)$. Hence $\mathbb{Q}(E[2], E_0[2])$ and $\mathbb{Q}(\sqrt{-N})$ are linearly disjoint. Since there is no odd prime $q||N$, Theorem \ref{thm:mod4} implies that Silverman's conjecture holds for $E$.

  In fact, the set $\mathcal{S}$ in this case consists of primes $\ell$ such that $\ell\equiv1,2,4\pmod{7}$ and $\ell\equiv3,5\pmod{8}$: $$\mathcal{S}=\{11,
 29,
 37,
 43,
 53,
 67,
 107,
 109,
 149,
 163,
 179,
 197,
 211,
 277,
 317,
 331,\ldots\}.$$ Computing the global root number gives $$r_\mathrm{an}(E^{(\ell)}/\mathbb{Q})=1, \quad r_\mathrm{an}(E^{(-\ell)}/\mathbb{Q})=0,\text{ for any }\ell\in \mathcal{S}.$$
\end{example}

\section{Proof of Theorem \ref{thm:criterion}}

\subsection{Congruences between Heegner points}\label{sec:congr-betw-heegn} We first recall the main theorem of \cite{KrizLi2016a}. 
\begin{theorem}\label{thm:maincongruence} Let $E$ and $E'$ be two elliptic curves over $\mathbb{Q}$ of conductors $N$ and $N'$ respectively. Suppose $p$ is a prime such that there is an isomorphism of semisimplified  $G_\mathbb{Q}:=\Gal(\overline{\mathbb{Q}}/\mathbb{Q})$-representations $$E[p^m]^{\mathrm{ss}} \cong E'[p^m]^{\mathrm{ss}}$$ for some $m\ge1$. Let $K$ be an imaginary quadratic field satisfying the Heegner hypothesis for both $N$ and $N'$. Let $P \in E(K)$ and $P' \in E'(K)$ be the Heegner points. Assume $p$ is split in $K$. Then we have
$$\left(\prod_{\ell|pNN'/M}\frac{|\tilde{E}^{\mathrm{ns}}(\mathbb{F}_{\ell})|}{\ell}\right)\cdot \log_{\omega_E}P \equiv \pm\left(\prod_{\ell|pNN'/M}\frac{|\tilde{E}'^{,\mathrm{ns}}(\mathbb{F}_{\ell})|}{\ell}\right)\cdot\log_{\omega_{E'}}P' \pmod {p^m}.$$ Here 
$$M = \prod_{\ell|\gcd(N,N') \atop a_{\ell}(E)\equiv a_{\ell}(E')\pmod{p^m}}\ell^{\ord_{\ell}(NN')}.$$
\end{theorem}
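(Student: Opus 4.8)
The plan is to express both $\log_{\omega_E}(P)$ and $\log_{\omega_{E'}}(P')$ as special values of anticyclotomic $p$-adic $L$-functions attached to the newforms $f$ and $f'$, and then to propagate the mod $p^m$ congruence between $f$ and $f'$ to a congruence between these values, tracking carefully the Euler factors at the primes where the two levels differ.

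First I would translate the representation-theoretic hypothesis into Hecke eigenvalue congruences. By the Brauer--Nesbitt theorem and the Chebotarev density theorem, the isomorphism $E[p^m]^{\mathrm{ss}}\cong E'[p^m]^{\mathrm{ss}}$ of $G_\mathbb{Q}$-representations is equivalent to $a_\ell(E)\equiv a_\ell(E')\pmod{p^m}$ for all primes $\ell\nmid pNN'$, the determinants matching automatically as the mod $p^m$ cyclotomic character. Thus the $q$-expansions of $f$ and $f'$ are congruent modulo $p^m$ away from the levels, and this congruence of $q$-expansions is the engine of the whole argument.

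Second, I would invoke a $p$-adic Gross--Zagier / Bertolini--Darmon--Prasanna formula to write $\log_{\omega_E}(P)$, up to an explicit $p$-adic unit period and a sign, as the value at the CM point attached to $K$ of a suitable $p$-depletion of $f$, regarded as a $p$-adic modular form; the hypothesis that $p$ splits in $K$ guarantees that this CM point is defined $p$-adically and that $\log_{\omega_E}(P)\in\mathbb{Q}_p$ is well defined via $E(K_\mathfrak{p})=E(\mathbb{Q}_p)$. By the Katz--Serre theory of $p$-adic modular forms, two $p$-adic modular forms whose $q$-expansions are congruent modulo $p^m$ take $p^m$-congruent values at CM points, and applying this to $f$ and $f'$ produces the desired congruence between the two logarithms. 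The subtlety is that the $p$-depletion and the Hecke action at the primes $\ell\mid NN'$ differ for $f$ and $f'$, so aligning the two $p$-adic forms forces the insertion, on each side, of local Euler factors; a direct local computation identifies these with $|\tilde E^{\mathrm{ns}}(\mathbb{F}_\ell)|/\ell$ and $|\tilde E'^{,\mathrm{ns}}(\mathbb{F}_\ell)|/\ell$. The factor $M$ is calibrated precisely so that the primes $\ell\mid\gcd(N,N')$ with $a_\ell(E)\equiv a_\ell(E')\pmod{p^m}$ already contribute matching Euler factors and require no correction, which is why the products run over $\ell\mid pNN'/M$.

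The hard part will be the integral refinement and the bad-prime bookkeeping. One must realize the $p$-adic $L$-functions (or the associated theta elements and Coleman primitives) as genuinely $p$-integral objects, so that a congruence modulo $p^m$ is meaningful and survives denominators coming from periods or congruence numbers; here the normalization $\pi_E^*(\omega_E)=f\,dq/q$ is essential. One must then compute the local Euler factors at every $\ell\mid NN'$, distinguishing good, multiplicative, and additive reduction, and check that each equals $|\tilde E^{\mathrm{ns}}(\mathbb{F}_\ell)|/\ell$. Finally, since the Bertolini--Darmon--Prasanna formula most naturally yields $\log_{\omega_E}(P)^2$ rather than the first power, one must either pass to a square-root $p$-adic $L$-function or argue directly with the linear logarithm; this square-root extraction, together with the sign ambiguity in the definition of the Heegner point, is the source of the $\pm$ in the conclusion.
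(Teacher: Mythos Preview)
The paper does not prove this theorem at all: it is introduced with the sentence ``We first recall the main theorem of \cite{KrizLi2016a}'' and is used purely as a black box in \S3.2. So there is no proof in the present paper to compare against; the result is imported from the authors' earlier work.

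That said, your sketch is broadly aligned with the strategy of the cited paper. The essential ingredients there are indeed (i) the passage from $E[p^m]^{\mathrm{ss}}\cong E'[p^m]^{\mathrm{ss}}$ to a $q$-expansion congruence $f\equiv f'\pmod{p^m}$ away from the levels, (ii) an expression of $\log_{\omega_E}P$ as the value of a $p$-adic modular form at a CM point via a Bertolini--Darmon--Prasanna / $p$-adic Waldspurger type formula (the assumption that $p$ splits in $K$ is used exactly as you say), (iii) an appeal to the $q$-expansion principle in the Katz theory of $p$-adic modular forms to transport the congruence to the CM values, and (iv) a level-raising/stabilization step that introduces the Euler factors $|\tilde E^{\mathrm{ns}}(\mathbb{F}_\ell)|/\ell$ at the primes dividing $pNN'/M$. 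Your description of $M$ as the set of common bad primes at which no correction is needed is also correct.

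One refinement worth flagging: in the actual argument the logarithm is obtained as a genuine \emph{first power}, not via a square root of a BDP $L$-value. The formula used relates $\log_{\omega_E}P$ directly to a Coleman primitive of $f$ evaluated at the CM point (a $p$-adic Waldspurger formula), so the $\pm$ comes only from the intrinsic sign ambiguity in the Heegner point and in matching the two formal models, not from extracting a square root. Apart from that, your outline matches the architecture of the proof in the reference.
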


\subsection{Proof of Theorem \ref{thm:criterion}}
For a prime $\ell\nmid Nd$, we have $a_\ell(E)=\pm a_\ell(E^{(d)})$ since $E^{(d)}$ is a quadratic twist of $E$. Since $E(\mathbb{Q})[2]\ne0$, we know that $|E(\mathbb{F}_\ell)|$ and $|E^{(d)}(\mathbb{F}_\ell)|$ are even since the reduction mod $\ell$ map is injective on prime-to-$\ell$ torsion. Hence if $\ell\ne2$, then $a_\ell(E)$, $a_\ell(E^{(d)})$ are also even. Since $a_\ell(E)=\pm a_\ell(E^{(d)})$, we obtain the following mod 4 congruence $$a_\ell(E)\equiv a_\ell(E^{(d)})\pmod{4},\quad \text{for any }\ell\nmid 2Nd.$$ It follows that we have an isomorphism of $G_\mathbb{Q}$-representations $$E[4]^\mathrm{ss}\cong E^{(d)}[4]^\mathrm{ss}.$$

Now we can apply Theorem \ref{thm:maincongruence} to $E'=E^{(d)}$, $p=2$ and $m=2$. By assumption, any prime $\ell| 2N$ splits in $K$. By the definition of $\mathcal{S}$, the prime $\ell=|d|$ splits in $K$. Notice the odd prime factors of $N'=N(E^{(d)})$ are exactly the odd prime factors of $Nd$, thus $K$ also satisfies the Heegner hypothesis for $N'$.

Let $\ell|\gcd(N,N')$ be an odd prime. We have:
\begin{enumerate}
\item if $\ell|| N$, then $a_\ell(E), a_\ell(E^{(d)})\in\{\pm1\}$ is determined by their local root numbers at $\ell$. By the definition of $\mathcal{N}$, we know that $\ell$ splits in $\mathbb{Q}(\sqrt{d})$, and hence $E/\mathbb{Q}_\ell$ and $E^{(d)}/\mathbb{Q}_\ell$ are isomorphic. It follows that $a_\ell(E)=a_\ell(E^{(d)})$.
\item if $\ell^2 |N$, then $a_\ell(E)=a_\ell(E^{(d)})=0$,
\end{enumerate}

Therefore $M$ is divisible by all the prime factors of $\gcd(N, N')$. Notice the odd part of $\gcd(N,N')$ equals to the odd part of $N$, so the congruence formula in Theorem \ref{thm:maincongruence} implies
\begin{equation}
  \label{eq:2d}
  \prod_{\ell|2d}\frac{|\tilde{E}^{\mathrm{ns}}(\mathbb{F}_{\ell})|}{\ell}  \cdot \log_{\omega_E}P \equiv \pm \prod_{\ell|2d}\frac{|\tilde{E}^{(d),\mathrm{ns}}(\mathbb{F}_{\ell})|}{\ell}\cdot \log_{\omega_{E^{(d)}}}P^{(d)}\pmod{4}.
\end{equation}
For $\ell=|d|$, we have $$|E(\mathbb{F}_\ell)|\not\equiv0\pmod{4}$$ by the definition of $\mathcal{S}$. Now Assumption (\ref{eq:star}) implies that the left-hand-side of (\ref{eq:2d}) is nonzero mod $4$. Hence the right-hand-side of (\ref{eq:2d}) is also nonzero. In particular, the Heegner point $P^{(d)}\in E^{(d)}(K)$ is non-torsion, and hence $r_\mathrm{an}(E^{(d)}/K)=1$ by the theorem of Gross--Zagier \cite{Gross1986} and Kolyvagin \cite{Kolyvagin1990}, \cite{Kolyvagin1988}, as desired.

\section{Proof of Theorem \ref{thm:mod4}} 
\subsection{Elliptic curves with partial 2-torsion and no rational cyclic 4-isogeny}

Let $E$ be an elliptic curve of conductor $N$. Assume $E(\mathbb{Q})[2]\cong \mathbb{Z}/2 \mathbb{Z}$. Then $\mathbb{Q}(E[2])/\mathbb{Q}$ is the quadratic extension $\mathbb{Q}(\sqrt{\Delta_E})$, where $\Delta_E$ is the discriminant of a Weierstrass equation of $E$.

Let $\phi: E\rightarrow E_0:=E/E(\mathbb{Q})[2]$ be the natural 2-isogeny. By \cite[Lem. 4.2 (i)]{Klagsbrun2017}, $E$ has no rational cyclic 4-isogeny if and only if $\mathbb{Q}(E_0[2])/\mathbb{Q}$ is a quadratic extension. Assume we are in this case, then $\mathbb{Q}(E_0[2])=\mathbb{Q}(\sqrt{\Delta_{E_0}})$.

\begin{lemma}\label{lem:mod4}
Let $\ell\nmid N$ be a prime. Then the following are equivalent:
  \begin{enumerate}
  \item   $|E(\mathbb{F}_\ell)|\not\equiv0\pmod{4}$,
  \item $E(\mathbb{F}_\ell)[2]\cong E_0(\mathbb{F}_\ell)[2]\cong \mathbb{Z}/2 \mathbb{Z}$,
  \item $\ell$ is inert in both $\mathbb{Q}(E[2])$ and $\mathbb{Q}(E_0[2])$.
  \end{enumerate}
\end{lemma}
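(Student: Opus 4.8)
The plan is to prove the chain of equivalences (1) $\Leftrightarrow$ (2) $\Leftrightarrow$ (3) by analyzing the $2$-torsion of $E$ and $E_0$ over $\mathbb{F}_\ell$ together with the structure of $E(\mathbb{F}_\ell)$ as a finite abelian group. Throughout, $\ell \nmid N$ so $E$ has good reduction at $\ell$, and the reduction map is an isomorphism on prime-to-$\ell$ torsion; in particular $E(\mathbb{Q})[2] \cong \mathbb{Z}/2\mathbb{Z}$ injects into $E(\mathbb{F}_\ell)[2]$, so $E(\mathbb{F}_\ell)[2]$ is either $\mathbb{Z}/2\mathbb{Z}$ or $(\mathbb{Z}/2\mathbb{Z})^2$ (for $\ell = 2$ the group $E(\mathbb{F}_2)[2]$ could a priori be cyclic of order $2$ only, but the same dichotomy $E(\mathbb{F}_\ell)[2] \in \{\mathbb{Z}/2, (\mathbb{Z}/2)^2\}$ holds since we always have the rational $2$-torsion point). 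Write $E(\mathbb{F}_\ell) \cong \mathbb{Z}/a \oplus \mathbb{Z}/ab$ with $a \mid b$ in the usual way; then $|E(\mathbb{F}_\ell)| = a^2 b$ and $E(\mathbb{F}_\ell)[2] = (\mathbb{Z}/2)^2$ iff $2 \mid a$.

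First I would establish (1) $\Leftrightarrow$ (2) for $E$, namely: $4 \mid |E(\mathbb{F}_\ell)|$ together with $E(\mathbb{F}_\ell)[2] \cong \mathbb{Z}/2\mathbb{Z}$ is impossible. The point is that if $E(\mathbb{F}_\ell)[2] \cong \mathbb{Z}/2\mathbb{Z}$ then in the notation above $a$ is odd, hence $|E(\mathbb{F}_\ell)| = a^2 b$ has the same $2$-adic valuation as $b$; but $E(\mathbb{F}_\ell)[2] = \mathbb{Z}/2$ forces $2 \mid b$ and $2 \nmid a$, which gives $|E(\mathbb{F}_\ell)|$ even but we must rule out $4 \mid b$. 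Here is where the $2$-isogeny enters: I would use $\phi \colon E \to E_0$ and its dual $\hat\phi \colon E_0 \to E$ with $\hat\phi \circ \phi = [2]_E$. Counting points, $|E(\mathbb{F}_\ell)| = |E_0(\mathbb{F}_\ell)|$ (isogenous curves over a finite field have the same number of points), and more precisely one analyzes the exact sequences coming from $\phi$: $\ker(\phi)(\mathbb{F}_\ell) = E(\mathbb{Q})[2] \otimes \mathbb{F}_\ell = \mathbb{Z}/2$ always, and the cokernel of $\phi \colon E(\mathbb{F}_\ell) \to E_0(\mathbb{F}_\ell)$ has order dividing $2$. Chasing $2 = [2]_E$ through $E(\mathbb{F}_\ell) \xrightarrow{\phi} E_0(\mathbb{F}_\ell) \xrightarrow{\hat\phi} E(\mathbb{F}_\ell)$ and using that both $\phi$ and $\hat\phi$ have cyclic-of-order-$2$ kernel on $\mathbb{F}_\ell$-points, one sees that the $2$-part of $E(\mathbb{F}_\ell)$ can grow by at most one factor of $2$ from that of $E_0(\mathbb{F}_\ell)$ and vice versa; combined with $|E(\mathbb{F}_\ell)| = |E_0(\mathbb{F}_\ell)|$ this pins down that $v_2(|E(\mathbb{F}_\ell)|) \geq 2$ forces $E(\mathbb{F}_\ell)[2] = (\mathbb{Z}/2)^2$ or $E_0(\mathbb{F}_\ell)[2] = (\mathbb{Z}/2)^2$ — and then symmetrically, because $E$ and $E_0$ play interchangeable roles under $\phi$ and $\hat\phi$, both must be $(\mathbb{Z}/2)^2$. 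This yields (1) $\Leftrightarrow$ (2): $4 \nmid |E(\mathbb{F}_\ell)|$ iff $E(\mathbb{F}_\ell)[2] \cong \mathbb{Z}/2$ iff (by the symmetric argument applied to $E_0$, whose point count is equal) $E_0(\mathbb{F}_\ell)[2] \cong \mathbb{Z}/2$.

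For (2) $\Leftrightarrow$ (3) I would translate $2$-torsion over $\mathbb{F}_\ell$ into a splitting condition in the degree-$2$ fields $\mathbb{Q}(E[2]) = \mathbb{Q}(\sqrt{\Delta_E})$ and $\mathbb{Q}(E_0[2]) = \mathbb{Q}(\sqrt{\Delta_{E_0}})$ (the latter using the hypothesis that $E$ has no rational cyclic $4$-isogeny, via \cite[Lem.~4.2(i)]{Klagsbrun2017} as quoted). Since $E[2]$ has a rational point, $\Gal(\mathbb{Q}(E[2])/\mathbb{Q})$ is at most $\mathbb{Z}/2$, and the extra $2$-torsion point of $E$ is $\mathbb{F}_\ell$-rational iff $\Frob_\ell$ acts trivially on $E[2]$ iff $\ell$ splits (or ramifies, but $\ell \nmid N \supseteq$ ramified primes of $\mathbb{Q}(E[2])$) in $\mathbb{Q}(E[2])$; equivalently $E(\mathbb{F}_\ell)[2] \cong \mathbb{Z}/2$ iff $\ell$ is inert in $\mathbb{Q}(E[2])$. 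The identical statement holds for $E_0$. So (2), which asks that both $E(\mathbb{F}_\ell)[2]$ and $E_0(\mathbb{F}_\ell)[2]$ equal $\mathbb{Z}/2$, is exactly (3). I expect the main obstacle to be the bookkeeping in the isogeny step of the previous paragraph — making precise, via the two short exact sequences for $\phi$ and $\hat\phi$ on $\mathbb{F}_\ell$-points, exactly how the $2$-Sylow subgroups of $E(\mathbb{F}_\ell)$ and $E_0(\mathbb{F}_\ell)$ are related, and extracting from $|E(\mathbb{F}_\ell)| = |E_0(\mathbb{F}_\ell)|$ the symmetric conclusion that $v_2 \geq 2$ on one side forces full $2$-torsion on both sides; everything else is a routine translation between group structure, Frobenius action, and splitting of primes.
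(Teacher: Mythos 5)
Your step (2) $\Leftrightarrow$ (3) is fine and is exactly the paper's translation via the Frobenius action on $E[2]$ and $E_0[2]$. The genuine problem is in your step (1) $\Leftrightarrow$ (2). You assert that ``$4 \mid |E(\mathbb{F}_\ell)|$ together with $E(\mathbb{F}_\ell)[2] \cong \mathbb{Z}/2\mathbb{Z}$ is impossible,'' and later that $v_2(|E(\mathbb{F}_\ell)|)\ge 2$ forces \emph{both} $E(\mathbb{F}_\ell)[2]$ and $E_0(\mathbb{F}_\ell)[2]$ to be $(\mathbb{Z}/2)^2$ ``by symmetry.'' Both claims are false: the $2$-Sylow subgroup of $E(\mathbb{F}_\ell)$ can perfectly well be cyclic of order $\ge 4$. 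Concretely, for $E=256b1:y^2=x^3-2x$ one has $\mathbb{Q}(E[2])=\mathbb{Q}(\sqrt{2})$ and $\mathbb{Q}(E_0[2])=\mathbb{Q}(\sqrt{-2})$ (a model of $E_0$ is $y^2=x^3+8x$); at $\ell=3$, which is inert in the first field and split in the second, one checks $E(\mathbb{F}_3)\cong\mathbb{Z}/4\mathbb{Z}$ while $E_0(\mathbb{F}_3)\cong(\mathbb{Z}/2\mathbb{Z})^2$, so $4\mid|E(\mathbb{F}_3)|$ even though $E(\mathbb{F}_3)[2]\cong\mathbb{Z}/2\mathbb{Z}$. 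Your per-curve equivalence ``$4\nmid|E(\mathbb{F}_\ell)|$ iff $E(\mathbb{F}_\ell)[2]\cong\mathbb{Z}/2\mathbb{Z}$ iff $E_0(\mathbb{F}_\ell)[2]\cong\mathbb{Z}/2\mathbb{Z}$'' would, via your own step (2) $\Leftrightarrow$ (3), force $\mathbb{Q}(E[2])=\mathbb{Q}(E_0[2])$ and make one of the two inert conditions in (3) redundant; the whole point of the lemma is that (1) is equivalent only to the \emph{conjunction} of the two conditions in (2).

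What the isogeny chase actually yields is a disjunction, not a conjunction, and the symmetry of $\phi$ and $\hat\phi$ only reproduces the same disjunction with the roles swapped -- it cannot upgrade ``or'' to ``and.'' The correct argument (the paper's) runs: (1) $\Rightarrow$ (2) because $|E(\mathbb{F}_\ell)|=|E_0(\mathbb{F}_\ell)|$ and each curve has an $\mathbb{F}_\ell$-rational point of order $2$ (for $E_0$, the generator of $\ker\hat\phi$), so $4\nmid$ the common order forces each $2$-torsion group to be $\mathbb{Z}/2\mathbb{Z}$; and for (2) $\Rightarrow$ (1), if $E(\mathbb{F}_\ell)[2]\cong\mathbb{Z}/2\mathbb{Z}$ but $4\mid|E(\mathbb{F}_\ell)|$, take $Q\in E(\mathbb{F}_\ell)$ of order $4$: then $2Q$ is the unique point of order $2$, which generates $\ker\phi$, so $\phi(Q)$ has order $2$ and $\hat\phi(\phi(Q))=2Q\neq 0$ shows $\phi(Q)\notin\ker\hat\phi$, whence $E_0(\mathbb{F}_\ell)[2]\cong(\mathbb{Z}/2\mathbb{Z})^2$, contradicting (2). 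In other words, hypothesis (2) about \emph{both} curves is exactly what is needed to exclude the cyclic-of-order-$\ge 4$ case that your sketch tried, incorrectly, to rule out for a single curve.
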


\begin{proof}
  Since $E$ and $E_0$ are isogenous and $\ell$ is a prime of good reduction, we know that $|E(\mathbb{F}_\ell)|=|E_0(\mathbb{F}_\ell)|$. So $|E(\mathbb{F}_\ell)|\not\equiv0\pmod{4}$ if and only if $|E_0(\mathbb{F}_\ell)|\not\equiv0\pmod{4}$. In this case, certainly (2) holds. Conversely, if (2) holds, then $E(\mathbb{F}_\ell)[4]\cong \mathbb{Z}/2 \mathbb{Z}$ (otherwise $E(\mathbb{F}_\ell)[4]\cong \mathbb{Z}/4 \mathbb{Z}$, and thus $E_0(\mathbb{F}_\ell)[2]\cong \mathbb{Z}/2 \mathbb{Z}\times \mathbb{Z}/2 \mathbb{Z}$ generated by $\phi(E(\mathbb{F}_\ell)[4])$ and the kernel of the dual isogeny $\hat\phi:E_0\rightarrow E$), hence $|E(\mathbb{F}_\ell)|\not\equiv0\pmod{4}$. We have shown that (1) is equivalent to (2).
  
  Moreover, $E(\mathbb{F}_\ell)[2]\cong \mathbb{Z}/2 \mathbb{Z}$ (resp. $\mathbb{Z}/2 \mathbb{Z}\times \mathbb{Z}/2 \mathbb{Z}$) if and only if $\mathbb{Q}_\ell(E[2])/\mathbb{Q}_\ell$ is a quadratic extension (resp. the trivial extension), if and only if $\ell$ is inert (resp. split) in $\mathbb{Q}(E[2])$. Similarly we know that $E_0(\mathbb{F}_\ell)[2]\cong \mathbb{Z}/2 \mathbb{Z}$ if and only if  $\ell$ is inert in $\mathbb{Q}(E_0[2])$. It follows that (2) is equivalent to (3).
\end{proof}

\subsection{Proof of Theorem~\ref{thm:mod4}}
 By assumption, the fields $\mathbb{Q}(E[2], E_0[2])$, $\mathbb{Q}(\sqrt{q})$ ($q$ runs all odd prime $q||N$) are linearly disjoint. Since $K$ satisfies the Heegner hypothesis for $N$ and 2 splits in $K$, we know the discriminant $d_K$ of $K$ is coprime to $2N$, hence $K$ is also linearly disjoint from the fields $\mathbb{Q}(E[2],E_0[2])$ and $\mathbb{Q}(\sqrt{q})$'s. It follows from Chebotarev's density that there is a positive density set $\mathcal{T}$ of primes $\ell\nmid 2N$ such that
  \begin{enumerate}
  \item $\ell$ is split in $K$,
  \item $\ell$ is inert in both $\mathbb{Q}(E[2])$ and $\mathbb{Q}(E_0[2])$,
  \item $\ell$ is split in $\mathbb{Q}(\sqrt{q})$ for any odd prime $q||N$.
  \end{enumerate}

    By Lemma \ref{lem:mod4}, we know $\mathcal{T}\subseteq \mathcal{S}$. For $\ell\in \mathcal{T}$, we consider $d=\ell^*:=(-1)^{(\ell-1)/2}\ell$. By the quadratic reciprocity law, we know that odd $q||N$ is split in $\mathbb{Q}(\sqrt{\ell^*})$ if and only if $\ell$ is split in $\mathbb{Q}(\sqrt{q})$. In particular, for any $\ell\in \mathcal{T}$, we have $\ell^*\in \mathcal{N}$. Now Theorem \ref{thm:criterion} implies that $r_\mathrm{an}(E^{(\ell^*)}/K)=1$. Moreover, $$r_\mathrm{an}(E^{(\ell^*)}/\mathbb{Q})=
    \begin{cases}
      0, & w(E^{(\ell^*)}/\mathbb{Q})=+1,\\
      1, & w(E^{(\ell^*)}/\mathbb{Q})=-1.
    \end{cases}
    $$

    Since $\mathbb{Q}(\sqrt{\ell^*})$ has discriminant coprime to $2N$, we have the well known formula $$w(E^{(\ell^*)}/\mathbb{Q})=w(E/\mathbb{Q})\cdot \legendre{\ell^*}{-N}.$$ By the quadratic reciprocity law, we obtain $$w(E^{(\ell^*)}/\mathbb{Q})=w(E/\mathbb{Q})\cdot \legendre{-N}{\ell}.$$ By assumption, $\mathbb{Q}(\sqrt{-N})$ is also linearly disjoint from the fields considered above, hence the global root number $w(E^{(\ell^*)}/\mathbb{Q})$ takes both signs for a positive proportion of $\ell\in \mathcal{T}$ by Chebotarev's density. Therefore $r_\mathrm{an}(E^{(\ell^*)}/\mathbb{Q})$ takes both values 0 and 1 for a positive proportion of $\ell\in \mathcal{T}$, as desired.

\bibliographystyle{alpha}
\bibliography{Congruence}

\end{document}